\documentclass{article}

\usepackage[english]{babel}

\usepackage[a4paper,top=2cm,bottom=2cm,left=3cm,right=3cm,marginparwidth=1.75cm]{geometry}

\usepackage{amsmath, amssymb}
\usepackage{graphicx}
\usepackage[colorlinks=true, allcolors=blue]{hyperref}
\usepackage{mathtools}
\usepackage{amsfonts}
\usepackage{amsthm}
\usepackage{tikz-cd}
\usepackage{comment}

\newtheorem{thm}{Theorem}[section]

\newtheorem{lem}[thm]{Lemma}
\newtheorem{prop}[thm]{Proposition}

\newtheorem{exam}[thm]{Example}
\numberwithin{equation}{section}
\newcommand{\mycomment}[1]{}

\newcommand{\ZZ}{\mathbb Z}

\newcommand{\eps}{\varepsilon}

\newcommand{\tC}{\Tilde{C}}

\newcommand{\s}{\sigma}

\usepackage[textwidth=0.8in]{todonotes}
\DeclareMathOperator{\Aut}{Aut}
\DeclareMathOperator{\id}{id}

\newcommand{\unnumberedfootnote}[1]{%
  \begingroup
  \renewcommand{\thefootnote}{}%
  \footnotetext[0]{#1}%
  \endgroup
}

\title{A note on smooth quotients of Prym varieties}
\author{Anatoli Shatsila}
\begin{document}
\maketitle
\begin{abstract}

We study pseudoreflections of geometric origin on Prym varieties of \'etale double covers. We prove that if the genus of the base curve is $g \geq 4$ then every such pseudoreflection has order 2. We use this result to show that, for $g \geq 5$, a non-trivial finite group $G$ of automorphisms of geometric origin acting faithfully on the Prym $P$ with $P/G$ smooth must be isomorphic to either $\mathbb{Z}/2\mathbb{Z}$ or $(\mathbb{Z}/2\mathbb{Z})^2$. We also show that the latter case can occur only for $g \leq 7$. This sharpens results of Auffarth, Lahoz and Naranjo.

\end{abstract}

\unnumberedfootnote{\textit{2020 Mathematics Subject Classification:} 14H40, 14L30, 14H37}
\unnumberedfootnote{\textit{Key words and phrases:} Abelian variety, Prym variety, Pseudoreflection, Smooth quotient.}

\section{Introduction}

Let $f\colon \widetilde C\longrightarrow C$ be an \'{e}tale double cover of a smooth projective curve $C$ of genus $g$, with covering involution $\iota$, and let
$(P,\Xi)=P(\widetilde C/C)$ be its principally polarized Prym variety. By an automorphism of an abelian variety we always mean a group automorphism. A polarized automorphism of $P$ is said to be of \textit{geometric origin} if it is induced by an automorphism of $\widetilde C$ commuting with $\iota$. We call an automorphism of $P$ \textit{a pseudoreflection} if its differential on the tangent space \(T_0P\) fixes a hyperplane pointwise.

The authors of \cite{ALN} studied pseudoreflections of geometric origin on Prym varieties. They proved that, for $g\geq5$, the locus of Prym varieties possessing such a pseudoreflection is the union of three explicit irreducible families. They also showed that the order of a pseudoreflection of geometric origin is either $2$ or $4$ (see \cite[Proposition 3.3]{ALN}). Moreover, assuming that the induced action on the Prym is faithful and the quotient is smooth, they obtained a list of nine possible groups. However, it is not known whether all of them can act on the Pryms with smooth quotients \cite[Remark 5.2]{ALN}. 

The main goal of this paper is to sharpen these conclusions. Our first result eliminates order $4$ for $g \geq 4$.

\begin{thm}[Theorem \ref{thm31}]\label{thm:order-two}
Let $g\geq4$. Every pseudoreflection on $P(\widetilde C/C)$ of geometric origin has order $2$.
\end{thm}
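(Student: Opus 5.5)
The plan is to translate the pseudoreflection condition into a condition on eigenspaces of holomorphic differentials and then apply Riemann--Hurwitz. Let $\sigma\in\Aut(\tC)$ commute with $\iota$ and induce a pseudoreflection $\phi$ on $P$, and let $\bar\sigma$ be the induced automorphism of $C$. The tangent space $T_0P$ is the dual of $H^0(\tC,\omega_{\tC})^-$, the $(-1)$-eigenspace of $\iota^*$, which has dimension $g-1$. By \cite[Proposition 3.3]{ALN} the order of $\phi$ is $2$ or $4$, so it suffices to rule out order $4$. Assume, for a contradiction, that $\phi$ has order $4$.

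Then $\sigma^*$ acts on $H^0(\omega_{\tC})^-$ with eigenvalue $1$ of multiplicity $g-2$ and a single eigenvalue $\zeta=\pm i$ of multiplicity one. Replacing $\sigma$ by $\sigma\iota$ if necessary (this does not change the action on the $(-1)$-part up to sign), I would normalise so that $\sigma$ or $\sigma^2$ is a nontrivial automorphism of finite order. The key count is then
\[
\dim H^0(\omega_{\tC})^{-,\langle\sigma\rangle}=g-2.
\]
Consider the quotient $\tC\to\tC/\langle\sigma,\iota\rangle$ and its intermediate quotients. The invariant $(-1)$-differentials of $\sigma$ are the anti-invariant differentials of the induced double cover $\tC/\langle\sigma\rangle\to C/\langle\bar\sigma\rangle$, or of its analogue if $\iota\in\langle\sigma\rangle$. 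Hence their dimension is $g(\tC/\sigma)-g(C/\bar\sigma)$, which is the dimension of a Prym-type variety. Similarly, $\sigma^2$ acts trivially on the $(-1)$-part of the differentials except on nothing: $\zeta^2=-1$, so $\sigma^2$ is a pseudoreflection of order $2$ with exactly one eigenvalue $-1$.

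The main step is to apply Riemann--Hurwitz to $\tau=\sigma^2$ (or $\tau\iota$), which acts on $P$ as an order-$2$ pseudoreflection. Write $g'=g(C/\bar\tau)$ and let $r$ be the number of fixed points of $\bar\tau$ on $C$. The Chevalley--Weil formula, or equivalently the Hurwitz counts for $\tC/\tau\to C/\bar\tau$, shows that the $-1$-eigenspace of $\tau^*$ on $H^0(\omega_{\tC})^-$ has dimension roughly $(g-1)/2$ minus a correction linear in the number of fixed points. Setting this equal to $1$ forces $r$ large and $g'$ small: essentially $g'\le 1$, with $C$ hyperelliptic or bielliptic and $\bar\tau$ the corresponding involution. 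Carrying out the same count for the order-$4$ element $\sigma$ itself, the eigenspace for $\zeta$ has dimension given by a Chevalley--Weil-type formula for the cyclic group $\ZZ/4$ acting on $\tC$ (or on the curve $\tC/\langle\sigma\rangle$-tower). This formula involves $g(\tC/\langle\sigma\rangle)$ and the rotation numbers at fixed points. Requiring it to be exactly $1$, while the $\bar\zeta$-eigenspace is $0$ and $g(C)\ge4$, should yield an inequality that fails for $g\ge 4$. The reason is that the $i$- and $-i$-eigenspaces have dimensions that differ by at most a bounded amount determined by fixed points, and their sum is about $(g-1)/2$ minus fixed-point terms. So the asymmetric pair $(1,0)$ with sum $1$ forces the genus down to $g\le 3$.

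I expect this last step to be the main obstacle. It requires a careful case analysis according to whether $\iota\in\langle\sigma\rangle$ (that is, $\sigma^2=\iota$, so $\bar\sigma$ is an involution) or not, and according to the fixed-point data of $\sigma$ on $\tC$. Since $\tC\to C$ is étale, $\sigma$ and $\sigma\iota$ cannot both fix points over the same point. I would first handle the case $\sigma^2=\iota$ directly: here the $\pm i$-eigenspaces of $\sigma^*$ split $H^0(\omega_{\tC})^-$. By the holomorphic Lefschetz formula their dimensions differ by a number controlled by the fixed points of $\sigma$, and there are none because $\sigma^2=\iota$ is free. So the two eigenspaces have equal dimension $(g-1)/2$, which cannot be $(1,0)$ or $(0,1)$ unless $g=3$ or lower. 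The remaining case, where $\sigma^2\neq\iota$, would be handled by reducing to the classification of order-$2$ pseudoreflections from the $\tau$ step and checking that no order-$4$ lift exists when $g\ge4$.
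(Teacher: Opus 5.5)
There is a genuine gap. The contradiction in the main case is only asserted (``should yield an inequality that fails for $g\ge4$''), and the mechanism you propose cannot produce it. Knowing that the $\zeta$- and $\bar\zeta$-eigenspaces of $\sigma$ on $V=H^0(\tC,K_{\tC})^-$ have dimensions $1$ and $0$ does not bound $g$ at all. For example, take a $\langle\iota,\sigma\rangle\cong\ZZ/2\ZZ\times\ZZ/4\ZZ$-cover of $\mathbb P^1$ with local monodromies $\sigma,\sigma,\sigma^2$ and $2k$ copies of $\iota\sigma^2$. Here $\iota$ acts freely and $g=2k+1$. By Chevalley--Weil, $\sigma$ acts on $V$ with eigenvalues $1^{k-1},(-1)^{k},\zeta$, which is exactly your $(1,0)$ pattern for every $k$.

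What rules out order $4$ is the other half of the pseudoreflection condition: $1$ has multiplicity $d-1$ and $-1$ is not an eigenvalue at all. In the paper, over $B=\tC/\langle\iota,\sigma\rangle$, this gives $g(\tC/\langle\sigma\rangle)=g(B)+d-1$ and $g(\tC/\langle\iota\sigma\rangle)=g(B)$. By Riemann--Hurwitz there are therefore at least $2d-2$ branch points of $\tC\to B$ whose inertia group is exactly $\langle\iota\sigma\rangle\cong\ZZ/4\ZZ$. Each of them gives a branch point of the double cover $\tC/\langle\iota\sigma^2\rangle\to\tC/\langle\iota,\sigma^2\rangle$. That cover has only $4-2g(\tC/\langle\iota,\sigma^2\rangle)\le4$ branch points, because $\dim V^{\sigma^2=-1}=1$. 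So $2d-2\le4$, which is impossible for $g\ge5$. For $g=4$ this bound is not contradictory, and the paper adds a signature argument: the $G$-cover would be of type $(0;4,4,4,4,2)$ with all monodromies in $\langle\iota\sigma\rangle$, so they cannot generate $G$. Your sketch contains none of these steps, and ``reducing to the classification of order-$2$ pseudoreflections'' is not yet an argument.

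Second, you never control the order of the lift $\sigma$. Since $\ker\rho$ can be nontrivial, for $g\ge5$ the lift can have order $8$ with $1\neq\sigma^4\in\ker\rho$, and for $g=4$ a priori also order $12$. Your Chevalley--Weil count for a cyclic group of order $4$ does not cover these cases. The paper needs separate inputs here:
\begin{itemize}
\item \cite[Lemma 3.1]{ALN}, giving $|\ker\rho|\le2$;
\item a direct Riemann--Hurwitz bound $|\ker\rho|\le3$ when $g=4$;
\item \cite[Lemma 3.5]{ALN} applied to $\tau=\sigma^2$, which has order $4$, induces an order-$4$ automorphism of $C$, and satisfies $g(\tC/\langle\iota,\tau^2\rangle)\le1$.
\end{itemize}

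Two smaller points. Replacing $\sigma$ by $\sigma\iota$ is not harmless, since $\rho(\sigma\iota)=-\rho(\sigma)$ is no longer a pseudoreflection. The case $\sigma^2=\iota$ needs no Lefschetz argument: $\rho(\sigma^2)=\alpha^2$ has eigenvalues $1^{d-1},-1$, whereas $\rho(\iota)=-\id_P$.
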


Moreover, we show that $g \geq 4$ is sharp (see Example \ref{ex:order-four}). The main idea of the proof is a careful analysis of the ramification data of intermediate covers from the properties of pseudoreflections.  

Combining Theorem \ref{thm:order-two} with the list in \cite[Proposition 5.1]{ALN} and the classification in \cite[Theorem 3.5]{ALA} yields the following.

\begin{thm}[Theorem \ref{thm41}]\label{thm:main}
Let $g\geq5$ and let $G\leq \Aut(\widetilde C)$ be a finite group such that every element of $G$ commutes with $\iota$. Let
\[
\rho\colon G\longrightarrow \Aut(P,\Xi)
\]
be the induced representation. Assume that $\rho$ is injective and that $P/\rho(G)$ is smooth. If $G$ is nontrivial, then $$ G\cong \ZZ/2\ZZ \qquad\text{or}\qquad G\cong (\ZZ/2\ZZ)^2.$$
Moreover, if $g > 7$ then $G\cong \ZZ/2\ZZ$. 
Both groups occur for some $g \geq 5$.
\end{thm}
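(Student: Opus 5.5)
The plan is to combine Theorem \ref{thm:order-two} with the Chevalley--Shephard--Todd type description of smooth quotients of abelian varieties. First, by the results of Auffarth and Lange--Auffarth \cite{ALA}, a finite group $H$ of automorphisms of an abelian variety $A$ (fixing the origin) has smooth quotient $A/H$ only if $H$ is generated by pseudoreflections. Moreover, \cite[Theorem 3.5]{ALA} classifies the possible pairs $(A,H)$ with $A/H$ smooth: $A$ decomposes, up to isogeny compatible with the action, into a product on whose factors $H$ acts through a product of irreducible pseudoreflection groups. The irreducible factors that can occur are either $(\ZZ/2\ZZ)$ or $\ZZ/3,\ZZ/4,\ZZ/6$ acting on an elliptic curve, or symmetric-group-type actions on $E^n$, and so on. Since $\rho$ is injective, $\rho(G)\cong G$ is generated by pseudoreflections. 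All of these are of geometric origin, because they are images of elements of $G$.

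Next I would invoke Theorem \ref{thm:order-two}. Since $g\geq5\geq4$, every pseudoreflection in $\rho(G)$ has order $2$, so $G$ is generated by involutions acting as reflections. I would then intersect with \cite[Proposition 5.1]{ALN}. That result lists nine candidate groups for $g\geq5$, and discarding every candidate that contains, or is forced by the \cite{ALA} classification to contain, a pseudoreflection of order $4$ (the $\ZZ/4$-type factors and their extensions) should leave only the groups built from order-$2$ reflections. For the remaining non-abelian candidates (e.g.\ dihedral or $S_n$-type reflection groups), I would use the \cite{ALN} description of the loci. There are only three families of Pryms carrying a pseudoreflection of geometric origin, and each family carries essentially one involution class acting as a reflection. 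Two distinct reflections coming from different families must therefore commute, and their product must be of the right type. This should force $G$ to be abelian and generated by at most two commuting involutions, giving $\ZZ/2\ZZ$ or $(\ZZ/2\ZZ)^2$.

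For the bound $g>7\Rightarrow G\cong\ZZ/2\ZZ$, suppose $G=\langle\sigma,\tau\rangle\cong(\ZZ/2\ZZ)^2$ with $\sigma,\tau$ reflections. Then $\sigma\tau$ acts with fixed space of codimension $2$ on $T_0P$. I would use Riemann--Hurwitz and the character (Chevalley--Weil / holomorphic Lefschetz) computation for the $(\ZZ/2)^3=\langle\sigma,\tau,\iota\rangle$ action on $\widetilde C$. The condition that $\sigma$ and $\tau$ each have $-1$-eigenspace of dimension $1$ on $H^0(\omega)^{-}$ pins down the genera of the quotient curves $\widetilde C/\langle\sigma\rangle$ and $\widetilde C/\langle\sigma\iota\rangle$, and the corresponding ones for $\tau$ and $\sigma\tau$. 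This reduces to inequalities on the numbers of fixed points, which should bound $g\leq7$.

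Finally, for existence, I would take the explicit examples from the families in \cite{ALN}. A hyperelliptic-type bielliptic involution gives the $\ZZ/2\ZZ$ case for all $g\geq5$. For $(\ZZ/2\ZZ)^2$, I would exhibit a $(\ZZ/2)^3$-cover with suitable branch data at some $g\in\{5,6,7\}$ and check smoothness via \cite{ALA}. The main obstacle is the elimination step from the nine groups and the $g\leq7$ bound, which requires careful ramification bookkeeping for the intermediate covers.
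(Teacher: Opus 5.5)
\paragraph{Where the proposal has a genuine gap.} Your handling of the $4$-torsion candidates is sound and matches the paper's Step 1. For $(\ZZ/4\ZZ)^2\rtimes S_2$ you still need to say why it is not generated by involutions: the surjection $(a,b;\eps)\mapsto a+b$ onto $\ZZ/4\ZZ$ sends every involution into $\{0,2\}$. The gap comes after this. The three remaining unwanted candidates, $(\ZZ/2\ZZ)^2\rtimes S_2$, $\bigl((\ZZ/2\ZZ)^2\rtimes S_2\bigr)\times S_2$ and $(\ZZ/2\ZZ)^3$, are all generated by reflections of order $2$, so Theorem \ref{thm:order-two} says nothing about them. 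Your substitute argument is not a deduction. Nothing prevents a Prym from lying in several of the three families of \cite{ALN}, or from carrying several reflections of one family. Nor does commutation of two reflections follow from which family they come from. Even if it did, it would not touch $(\ZZ/2\ZZ)^3$, which is abelian and generated by three commuting reflections; ``at most two'' is exactly what must be proved.

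\paragraph{How the paper excludes the dihedral groups.} The paper counts components of the theta divisor. By \cite[Theorem 3.5]{ALA} there is a polarized splitting $(P,\Xi)\cong(E^2,\Theta_2)\times(B,\Theta_B)$ with $\dim B=d-2>0$, so $\Xi$ has at least three components. By \cite[Remark 4.2]{ALN}, a decomposable Prym with a geometric pseudoreflection is $(F,[0])\times(JH,\Theta_H)$, whose theta divisor has exactly two.

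\paragraph{The missing count for $(\ZZ/2\ZZ)^3$ and for $g\le 7$.} Your Riemann--Hurwitz/character idea is the right one (it is the paper's Steps 3 and 4), but the decisive count is missing. Let $G\cong(\ZZ/2\ZZ)^k$ and $K=\langle\iota,G\rangle\cong(\ZZ/2\ZZ)^{k+1}$; here $\iota\notin G$, because $G$ fixes a nonzero space of Prym differentials. Set $B=\tC/K$. For each of the $2^k$ characters $\chi$ of $K$ with $\chi(\iota)=-1$, let $m_\chi$ be the dimension of its eigenspace, so that $\sum_\chi m_\chi=d$. Then $\tC/\ker\chi\to B$ has $n_\chi=2(m_\chi-g(B)+1)$ branch points. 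Inertia groups have order $2$ and are never $\langle\iota\rangle$, so each of the $R$ branch points of $\tC\to B$ is counted by exactly $2^{k-1}$ of these characters. The character $\chi_0$ trivial on $G$ has $m_{\chi_0}=d-k$, and $n_{\chi_0}\le R$.

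For $k=2$ this gives $R=d+4-4g(B)$ and $2(d-1-g(B))\le R$, i.e.\ $d+2g(B)\le 6$, hence $g\le 7$. For $k=3$ it gives $3d+4g(B)\le 16$ with $d$ even, so the case $d=4$ survives and needs a separate structural argument. By \cite[Theorem 3.5]{ALA} and \cite[Remark 4.2]{ALN}, at most $p\le 1$ of the three $\ZZ/2\ZZ$ factors yields a principally polarized elliptic direct factor. The other $q=3-p$ lie in a standard construction of polarization type $(2,\dots,2)$, whose auxiliary variety has dimension at least $q$. This forces $d\ge p+2q\ge 5$, a contradiction.

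Without these ingredients your proposal does not exclude $(\ZZ/2\ZZ)^3$ or the dihedral groups. For existence, the paper simply cites \cite[Remark 5.2]{ALN} and \cite{BO26}.
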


Groups containing elements of order 4 are eliminated by Theorem \ref{thm:order-two}. The situation with 2-groups is more delicate and requires additional arguments involving an analysis of the decomposition of the Prym and its theta divisor. 

\subsection*{Acknowledgements.} 
The author has been supported by the Polish National Science Center project number 2024/53/N/ST1/01634.

\section{Preliminaries}\label{sec:prelim}

\subsection{Prym varieties and geometric automorphisms}

Let $f:\tC\to C$ be an \'{e}tale double cover of a smooth projective complex curve of genus $g$. The Prym variety of the cover $f$ is defined as $$P=P(\widetilde C/C):=\ker\bigl(\operatorname{Nm}_f: J\tC\to JC\bigr)^0.$$
The Prym variety \(P\) carries a natural principal polarization $\Xi$ and we have \(\dim P = d = g-1\). The covering involution $\iota$ acts as $+1$ on $f^*JC$ and as $-1$ on $P$. On holomorphic differentials this gives
$$H^0(\widetilde C,K_{\widetilde C})
=H^0(\widetilde C,K_{\widetilde C})^+
\oplus H^0(\widetilde C,K_{\widetilde C})^-,$$
with
$$ H^0(P,\Omega_P^1)\cong H^0(\widetilde C,K_{\widetilde C})^-.$$
Let $$Z(\iota)=\{\sigma\in\Aut(\widetilde C):\sigma\iota=\iota\sigma\}.$$
Every automorphism $\sigma\in Z(\iota)$ preserves $P$ and induces a polarized automorphism of the Prym. We denote the resulting homomorphism by
\[
\rho\colon Z(\iota)\longrightarrow \Aut(P,\Xi).
\]
An automorphism of $P$ lying in the image of $\rho$ is said to be of geometric origin.

We will use the following result.

\begin{lem}[\cite{ALN}, Lemma 3.1]\label{lem:kernel}
Assume $g\geq5$. Then $|\ker\rho|\leq2$. Moreover, if $\operatorname{id}\neq\mu\in\ker\rho$, then $$g\bigl(\widetilde C/\langle\mu,\iota\rangle\bigr)\leq1.$$
\end{lem}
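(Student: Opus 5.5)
Since $\iota\in\ker\rho$ and $\iota$ acts as $-1$ on $P$ and is trivial on $JC$, the plan is to show that any $\mu\in\ker\rho$ acts trivially on $H^0(\widetilde C,K_{\widetilde C})^-$. We then derive numerical restrictions from the Chevalley--Weil / Lefschetz character formula. Set $H=\langle\mu,\iota\rangle$. Since $\mu$ commutes with $\iota$ and acts trivially on $P$ (as a group automorphism, hence $\mu\neq\iota$ unless $\mu=\id$), $H$ is a finite abelian group containing $\iota$.

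\textbf{Step 1: structure of $\ker\rho$.} First we show that $\ker\rho$ is finite and abelian-like. Let $\mu\in\ker\rho$. Then $\mu$ acts trivially on $H^0(K_{\widetilde C})^-$, of dimension $g-1$. The element $\mu\iota$ acts as $-1$ there. For any $\tau\in H$, the dimension of $H^0(K_{\widetilde C})^{\tau}$ equals the genus of $\widetilde C/\langle\tau\rangle$. Consider the quotient $D=\widetilde C/\langle\mu\iota\rangle$. Its differentials pull back exactly to the $\mu\iota$-invariant forms. These contain none of the anti-invariant part, because $\mu\iota=-1$ there. Hence $g(D)\le \dim H^0(K_{\widetilde C})^+ = g$.

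\textbf{Step 2: Riemann--Hurwitz bound.} Let $n$ be the order of $\mu\iota$ and let $r$ be the number of fixed points. Riemann--Hurwitz gives $2(2g-1)-2=n(2g(D)-2)+\text{ram}$, and a lower bound on the ramification comes from the holomorphic Lefschetz formula. More efficiently, we compare $\widetilde C/H$ with $C/\langle\bar\mu\rangle$, where $\bar\mu$ is the automorphism of $C$ induced by $\mu$. The invariant forms satisfy
\[
H^0(K_{\widetilde C})^{H}=H^0(K_{\widetilde C})^{+,\mu}\subseteq H^0(K_C).
\]
The key computation is the following. The forms on $\widetilde C/\langle\mu\rangle$ are the $\mu$-invariants, which include all of $H^0(K_{\widetilde C})^-$. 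So $g(\widetilde C/\langle\mu\rangle)\ge g-1$, while Riemann--Hurwitz gives $g(\widetilde C/\langle\mu\rangle)\le (2g-2)/m+1$ with $m=\operatorname{ord}\mu$. For $g\ge5$ this forces $m\le2$, hence $\mu^2=\id$. Applying the same argument to a subgroup of $\ker\rho$ of order $m=|\ker\rho|$ gives $|\ker\rho|\le 2$, since $(2g-2)/3+1<g-1$ once $g\ge5$.

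\textbf{Step 3: genus of $\widetilde C/H$.} Now let $\mu\neq\id$, so $\mu$ is an involution and $H\cong(\ZZ/2\ZZ)^2$ with elements $\id,\iota,\mu,\mu\iota$. The standard formula for Klein-four quotients gives
\[
g(\widetilde C)=g(\widetilde C/\langle\iota\rangle)+g(\widetilde C/\langle\mu\rangle)+g(\widetilde C/\langle\mu\iota\rangle)-2g(\widetilde C/H).
\]
Here $g(\widetilde C)=2g-1$ and $g(\widetilde C/\langle\iota\rangle)=g$. From Step 2, $g(\widetilde C/\langle\mu\rangle)\ge g-1$. By Riemann--Hurwitz for the double cover $\widetilde C\to\widetilde C/\langle\mu\rangle$, we get $g(\widetilde C/\langle\mu\rangle)\le g$, with equality only if the cover is étale; $g-1$ corresponds to 4 fixed points. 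Also $g(\widetilde C/\langle\mu\iota\rangle)\ge g(\widetilde C/H)$. Substituting yields
\[
g(\widetilde C/H)\le 2g-1-g-(g-1)+\bigl(g(\widetilde C/H)-g(\widetilde C/\langle\mu\iota\rangle)\bigr)+g(\widetilde C/H).
\]
Rearranging carefully, the anti-invariant part of $\mu\iota$ on $H^0(K)^+$ is forced to be essentially everything. This gives $g(\widetilde C/H)=\dim H^0(K)^{H}\le 1$. This last bookkeeping is the main obstacle: one must pin down exactly the $\pm$ eigenspace dimensions of $\mu$ on $H^0(K_{\widetilde C})^+$ using the fixed-point counts, so that $g(\widetilde C/\langle\mu\rangle)-(g-1)=\dim H^0(K)^{+,\mu}=g(\widetilde C/H)$. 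Combined with Riemann--Hurwitz, this gives $g(\widetilde C/\langle\mu\rangle)\le g$, hence $g(\widetilde C/H)\le 1$.
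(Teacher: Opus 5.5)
Your argument is correct and follows essentially the same route as the proof the paper cites from \cite{ALN}. The paper reproduces that proof in the $g=4$ part of Theorem~\ref{thm31}. The first part matches exactly: $\ker\rho$ fixes all of $H^0(\tC,K_{\tC})^-$, so $g(\tC/\ker\rho)\ge g-1$, and Riemann--Hurwitz gives $|\ker\rho|\le (2g-2)/(g-2)<3$.

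For the second part, the paper uses the double cover $\tC/\langle\iota\mu\rangle\to\tC/\langle\iota,\mu\rangle$. Its anti-invariant differentials are the Prym differentials on which $\mu$ acts by $-1$, and there are none. So both curves have the same genus $h$, and Riemann--Hurwitz forces $h\le1$. You were one line from this in Step 1. There you observed that $\tC/\langle\mu\iota\rangle$ carries no Prym differentials, which gives $g(\tC/\langle\mu\iota\rangle)=g(\tC/H)$, not merely $\le g$.

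Your route through $\tC\to\tC/\langle\mu\rangle$ is equally valid. The identity $g(\tC/\langle\mu\rangle)=(g-1)+g(\tC/H)$ that you call the main obstacle is immediate and needs no fixed-point counts. Since $\mu$ commutes with $\iota$ and is trivial on $H^0(K_{\tC})^-$, one has $H^0(K_{\tC})^{\mu}=H^0(K_{\tC})^{H}\oplus H^0(K_{\tC})^-$. Combined with $g(\tC/\langle\mu\rangle)\le g$ from Riemann--Hurwitz, this gives $g(\tC/H)\le1$.

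Three pieces of the write-up should be removed or corrected.
\begin{itemize}
\item The Accola-type formula cannot yield the bound. The joint $(\iota,\mu)$-eigenspaces of types $(+,+),(+,-),(-,+),(-,-)$ have dimensions $h,\ g-h,\ g-1,\ 0$, so both sides of that formula equal $2g-1+2h$. Its content is tautological here.
\item The displayed inequality in Step 3 does not follow from the lines preceding it.
\item Your opening sentence is false. Since $\rho(\iota)=-\id_P$, we have $\iota\notin\ker\rho$. This is exactly what guarantees $\mu\neq\iota$, and hence $H\cong(\ZZ/2\ZZ)^2$, in Step 3.
\end{itemize}
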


\subsection{Pseudoreflections}

Let a finite group $H$ act holomorphically on a complex manifold $X$. \textit{A pseudoreflection at $x\in X$} is an element of $H$ whose fixed locus passing through $x$ has pure codimension $1$. If $A$ is an abelian variety and $s$ is an automorphism, then $s$ is a pseudoreflection if and only if its differential on $T_0A$ fixes a hyperplane pointwise (note that in this case we do not need to specify the points $x$). This means that the eigenvalues of $s$ on $T_0A$, and, equivalently, on its dual $H^0(A,\Omega_A^1)$, are
$$1,\ldots,1,\lambda, \qquad \lambda\neq1.$$

The Chevalley-Shephard-Todd theorem says that $X/H$ is smooth if and only if for every $x \in X$ the group $\operatorname{Stab}_H(x)$ is generated by pseudoreflections. Thus, if $H$ is a group of automorphisms of an abelian variety $A$, then smoothness of $A/H$ implies that $H$ is generated by pseudoreflections.

We have the following result.

\begin{prop}[\cite{ALN}, Proposition 3.3]\label{thm:ALN-orders}
For $g\geq3$, every pseudoreflection on a Prym variety of geometric origin has order $2$ or $4$.
\end{prop}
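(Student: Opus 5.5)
The argument has two parts. First, a rationality argument shows that the order of a pseudoreflection lies in $\{2,3,4,6\}$. Then a ramification count on intermediate quotient curves rules out $3$ and $6$.

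\emph{Restricting to $\{2,3,4,6\}$.} Let $\sigma\in Z(\iota)$ with $s=\rho(\sigma)$ a pseudoreflection of order $n\geq 2$. Its eigenvalues on $H^0(P,\Omega^1_P)\cong H^0(\widetilde C,K_{\widetilde C})^-$ are $1,\ldots,1,\lambda$, with $\lambda$ a primitive $n$-th root of unity. The action of $s$ on $H^1(P,\mathbb Q)$ is rational. Its complexification is the analytic representation plus its complex conjugate, so it has eigenvalues $1$ (with multiplicity $2d-2$), $\lambda$ and $\bar\lambda$. This multiset is stable under $\operatorname{Gal}(\mathbb Q(\lambda)/\mathbb Q)$, so all Galois conjugates of $\lambda$ lie in $\{\lambda,\bar\lambda\}$. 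Hence $\varphi(n)\leq 2$ and $n\in\{2,3,4,6\}$.

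\emph{Reformulating as a genus condition.} Put $D=C/\langle\bar\sigma\rangle$, where $\bar\sigma$ is the automorphism of $C$ induced by $\sigma$. The invariant subspace satisfies
\[
\dim H^0(\widetilde C,K_{\widetilde C})^{-,\sigma}=g(\widetilde C/\langle\sigma\rangle)-g(\widetilde C/\langle\sigma,\iota\rangle),
\]
which is the dimension of the Prym of the double cover $\widetilde C/\langle\sigma\rangle\to \widetilde C/\langle\sigma,\iota\rangle$. The pseudoreflection condition says this dimension equals $g-2$. Write $2r$ for the number of branch points of that double cover. By the Prym dimension formula the condition becomes $g(D')-1+r=g-2$, where $D'=\widetilde C/\langle\sigma,\iota\rangle$ (equal to $D$ or to a double cover of $D$, depending on whether $\iota\in\langle\sigma\rangle$ up to the relevant power). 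Riemann--Hurwitz for $C\to D$ of degree $k=\operatorname{ord}\bar\sigma$ gives
\[
g-1=k\bigl(g(D)-1\bigr)+\tfrac12\sum_{p}(e_p-1),
\]
so $g(D)$ is roughly at most $(g-1)/k+1$. Therefore $r$ must be large, of order $g(1-1/k)$. The branch points counted by $r$ lie over points of $D$ with nontrivial stabilizer. Their number is at most the number of branch points of $C\to D$, which Riemann--Hurwitz also bounds in terms of $g$ and $k$.

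\emph{Excluding orders $3$ and $6$ (main obstacle).} For $n\in\{3,6\}$ I would first try the holomorphic Lefschetz (Eichler trace) formula for $\sigma$ acting on $H^0(K_{\widetilde C})$, split into $\iota$-eigenspaces. The rotation numbers at fixed points of an element of order $3$ come in conjugate pairs $\zeta,\bar\zeta$. The aim is to show that the multiplicities of $\zeta$ and $\bar\zeta$ in the anti-invariant part cannot be $(1,0)$ once $g\geq 3$: the trace formula should force both to be at least $1$, or both to be $0$. If the trace formula is not decisive, the fallback is the genus count above with $k=3$ (or $k=6$). From $g(D')=g-1-r$ and the Riemann--Hurwitz bound, I would derive a contradiction for $g\geq 3$, checking the small cases of fixed-point numbers by hand. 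Orders $2$ and $4$ survive this analysis, which gives the proposition.
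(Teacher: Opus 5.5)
Your first step is correct: the rational representation on $H^1(P,\mathbb Q)$ has eigenvalues $1^{2d-2},\lambda,\bar\lambda$, so $\varphi(n)\le 2$ and $n\in\{2,3,4,6\}$. The paper only quotes this proposition from \cite{ALN}, so I am judging your argument on its own.

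The gap is that the real content, excluding orders $3$ and $6$, is never carried out. You list tools you would try, and the one concrete heuristic you give does not produce a contradiction. Bounding $r$ by the number of branch points of $C\to D$ is too weak. For $k=3$, Riemann--Hurwitz with $g(D)=g-1-r$ gives $5-2g+3r$ branch points of $C\to D$. The inequality $2r\le 5-2g+3r$ then only says $r\ge 2g-5$. Your hope that a trace formula forces the multiplicities of $\zeta,\bar\zeta$ on the anti-invariant part to vanish together is likewise asserted, not derived. There are also loose ends. You ignore lifts $\sigma$ whose order exceeds that of $s$ (nontrivial $\ker\rho$), and handling $k=6$ directly is messier than necessary. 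Finally, $\widetilde C/\langle\sigma,\iota\rangle=(\widetilde C/\langle\iota\rangle)/\langle\bar\sigma\rangle$ is always equal to $D$, never a double cover of it.

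The missing idea is that $r$ is not large but zero. If $s$ has order $6$, then $s^2=\rho(\sigma^2)$ is an order-$3$ pseudoreflection, so you may assume $s$ has order $3$. Replacing $\sigma$ by a suitable power, take $\sigma$ of order $3^a$ with $a\ge 1$; $\rho(\sigma)$ is still an order-$3$ pseudoreflection. This also disposes of any nontrivial kernel of $\rho$. Then $\iota\notin\langle\sigma\rangle$, so $\bar\sigma$ has order $3^a$ and $G=\langle\sigma,\iota\rangle\cong\mathbb Z/3^a\times\mathbb Z/2$. Every inertia group of $\widetilde C\to\widetilde C/G=D$ is a subgroup not containing $\iota$, because $\iota$ acts freely. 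By coprimality, such a subgroup lies in $\langle\sigma\rangle$. Hence $\widetilde C/\langle\sigma\rangle\to D$ is an \'etale double cover. Its Prym differentials are exactly the $\sigma$-invariant Prym differentials on $\widetilde C$, so $g(D)-1=d-1=g-2$, i.e.\ $g(D)=g-1$. Riemann--Hurwitz for $C\to D$ of degree $3^a\ge 3$ then gives $2g-2\ge 3(2g-4)$, i.e.\ $g\le 2$, a contradiction. This is the same inertia-group bookkeeping the paper uses in the proof of Theorem~\ref{thm31}. With it your genus count closes in two lines; without it the proposal does not prove the statement.
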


We will eliminate the second possibility when $g\geq4$.

\section{Excluding order four}\label{sec:order4}

We now prove Theorem \ref{thm:order-two}.

\begin{thm}
\label{thm31}
Let $g\geq 4$. Every pseudoreflection on $P(\widetilde C/C)$ of geometric origin has order $2$.
\end{thm}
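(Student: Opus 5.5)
By Proposition \ref{thm:ALN-orders} it suffices to rule out order $4$, so we argue by contradiction. Let $\sigma\in Z(\iota)$ be such that $s=\rho(\sigma)$ is a pseudoreflection of order $4$. Then $s$ acts on $H^0(P,\Omega^1_P)\cong H^0(\widetilde C,K_{\widetilde C})^-$ with eigenvalues $1,\dots,1,\lambda$, where $\lambda=\pm i$. Replacing $\sigma$ by $\sigma\iota$ if necessary, we may assume that $\sigma$ has order $4$ or $8$ on $\widetilde C$; since $\sigma\iota$ acts on $P$ as $-s$, we must keep track of which lift we use. The plan is to read off the multiplicities of the characters of $\langle\sigma,\iota\rangle$ on $H^0(K_{\widetilde C})^-$ from the intermediate quotients, and to use the Chevalley--Weil formula to show that these multiplicities cannot be $(g-2,0,1,0)$ for the eigenvalues $(1,-1,\lambda,\bar\lambda)$ once $g\ge 4$.

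\textbf{Step 1.} Set $\tau=\sigma^2$. Then $\rho(\tau)=s^2$ has eigenvalues $1,\dots,1,-1$, so $\rho(\tau)$ is a pseudoreflection of order $2$. The dimension of the $(+1)$-eigenspace of $\tau$ on $H^0(K_{\widetilde C})^-$ equals $g(\widetilde C/\langle\tau\rangle)-g(\widetilde C/\langle\tau,\iota\rangle)$, and we need this to be $g-2$. Similarly, the eigenvalue $+1$ of $s$ occurring with multiplicity $g-2$ gives
\[
g(\widetilde C/\langle\sigma\rangle)-g(\widetilde C/\langle\sigma,\iota\rangle)=g-2 .
\]
In addition, the eigenvalue $-1$ of $s$ has multiplicity $0$. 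This is the same as saying that the $\tau$-invariant part of $H^0(K)^-$ equals the $\sigma$-invariant part. Put $D=C/\langle\bar\sigma\rangle$ and $D'=C/\langle\bar\tau\rangle$. Riemann--Hurwitz for $C\to D'$ and $C\to D$, together with $g(D')\ge g(C/\ldots)$, forces these covers to have genus roughly $g/2$, since a Prym-type quotient of dimension $g-2$ must survive. Concretely, for the double cover $\widetilde C/\langle\tau\rangle\to C/\langle\bar\tau\rangle$ we get $g(\widetilde C/\langle\tau\rangle)\approx 2g(D')-1+\tfrac{r}{2}$, where $r$ counts branch points, and the condition of dimension $g-2$ together with Riemann--Hurwitz $2g-2=2(2g(D')-2)+b$ forces very few fixed points of $\bar\tau$.

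\textbf{Step 2.} Apply the Chevalley--Weil formula for the cyclic group $\langle\bar\sigma\rangle\cong\mathbb Z/4$, or for the full group generated by $\sigma$ and $\iota$, to compute the multiplicities of the eigenvalues $\pm i$ on $H^0(K)^-$ in terms of the rotation numbers at the fixed points of $\sigma^k\iota^\epsilon$. Requiring that exactly one of $i,-i$ occurs, with multiplicity $1$, and that $-1$ does not occur, yields linear equations in the numbers of points with each local rotation datum. I expect these equations to have solutions only when $g(D)=0$ and the number of branch points is small, which would force $g\le 3$. The case $g=3$ is the one realized in Example~\ref{ex:order-four}.

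\textbf{Main obstacle.} The hardest part is the bookkeeping in Step 2. One must separate the cases according to whether $\sigma^2=\iota$, $\sigma^4=\iota$, or $\sigma$ has order $4$ with $\sigma^2\neq\iota$, since these determine how $\iota$ interacts with the fixed points of $\sigma$; for instance, if $\sigma^2=\iota$ then $\sigma$ has no fixed points. In each case I would first use the eigenvalue $-1$ condition to bound the genus of $\widetilde C/\langle\tau,\iota\rangle$ from below, and then use the $\pm i$ condition to bound it from above, expecting the two bounds to be incompatible exactly when $g\ge4$.
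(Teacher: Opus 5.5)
Your proposal has a genuine gap: it does not yet contain a proof. The decisive contradiction is only announced (``I expect these equations to have solutions only when\ldots''), and the heuristics of Step~1 point the wrong way.

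With $G=\langle\iota,\sigma\rangle$ and $B=\widetilde C/G=D$, the absence of the eigenvalue $-1$ gives $g(\widetilde C/\langle\iota\sigma\rangle)=g(B)$. So the double cover $\widetilde C/\langle\iota\sigma\rangle\to B$ has $2(1-g(B))$ branch points, which forces $g(D)\le 1$. Likewise $g(\widetilde C/\langle\iota\tau\rangle)=g(D')+1$ forces $g(D')\le 2$, and then $\bar\tau$ has at least $2g-6$ fixed points on $C$. So $D$ and $D'$ do not have genus about $g/2$, and $\bar\tau$ has many fixed points, not few.

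The idea you are missing is a branch-point count:
\begin{itemize}
\item $\widetilde C/\langle\sigma\rangle\to B$ has $2(d-g(B))$ branch points, while $\widetilde C/\langle\iota\sigma\rangle\to B$ has only $2(1-g(B))$. Hence at least $2d-2$ points of $B$ have inertia exactly $\langle\iota\sigma\rangle\cong\ZZ/4\ZZ$.
\item Each such point gives a ramification point of the double cover $\widetilde C/\langle\iota\sigma^2\rangle\to\widetilde C/\langle\iota,\sigma^2\rangle$.
\item That cover has genus jump $1$, because the $(-1)$-eigenspace of $\rho(\sigma^2)$ is a line. So it has at most $4$ branch points.
\end{itemize}
This gives $2d-2\le 4$, which is impossible for $d\ge 4$. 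At $g=4$ the inequality is borderline and a finer analysis is needed. There $g(B)=0$, the signature is forced to be $(0;4,4,4,4,2)$, and the local monodromies all lie in $\langle\iota\sigma\rangle$, so they cannot generate $G$. A Chevalley--Weil computation could encode all of this, but you have to actually carry it out.

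The reduction to lifts of order $4$ or $8$ is also not justified as written. Since $\rho(\sigma\iota)=-s$ has eigenvalues $-1,\dots,-1,-\lambda$, it is not a pseudoreflection, so you cannot trade $\sigma$ for $\sigma\iota$. The order of a lift is $4\,|\langle\sigma\rangle\cap\ker\rho|$, so bounding it requires Lemma~\ref{lem:kernel} ($|\ker\rho|\le 2$ for $g\ge 5$). For $g=4$ one only gets $|\ker\rho|\le 3$ from Riemann--Hurwitz, and lifts of order $12$ must also be handled, by passing to $\sigma^3$.

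Your case split is also off. The cases $\sigma^2=\iota$ and $\sigma^4=\iota$ are both impossible, because $\rho(\sigma^2)=s^2$ and $\rho(\sigma^4)=\id_P$ both differ from $\rho(\iota)=-\id_P$. The case you actually have to treat is an order-$8$ lift, where $\mu=\sigma^4$ is a nontrivial element of $\ker\rho$ different from $\iota$. That case needs a different kind of argument. Since $\mu$ acts trivially on the Prym, $g(\widetilde C/\langle\iota,\mu\rangle)\le 1$, and $\sigma^2$ induces an automorphism of order $4$ on $C$. Then \cite[Lemma 3.5]{ALN} gives $g\le 3$.
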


\begin{proof}
We start with the case $g \geq 5$. By Proposition \ref{thm:ALN-orders}, the only possible orders are $2$ and $4$. Take an order-$4$ pseudoreflection  $\alpha\in\Aut(P,\Xi)$ of geometric origin. Choose $\sigma\in Z(\iota)$ with $\rho(\sigma)=\alpha.$ By Lemma \ref{lem:kernel}, the intersection $\langle\sigma\rangle\cap\ker\rho$ has order at most $2$. Since $\alpha$ has order $4$, it follows that $\sigma$ has order $4$ or $8$.

First, we rule out the case in which $\sigma$ has order $4$. Since $\alpha=\rho(\sigma)$ is an order-$4$ pseudoreflection, its action on $$H^0(P,\Omega_P^1)\cong H^0(\widetilde C,K_{\widetilde C})^-$$
has eigenvalues $$\underbrace{1,\ldots,1}_{d-1},\zeta,
\ \ \  \zeta\in\{i,-i\},$$ where $d=\dim P=g-1\geq4$. In particular, $\rho(\sigma^2)$ has eigenvalues $1^{d-1},-1$, whereas $\rho(\iota)=-\id_P$. Hence $\sigma^2\neq\iota$, and therefore
$$G:=\langle\iota,\sigma\rangle\cong\ZZ/2\ZZ\times\ZZ/4\ZZ.$$
Set $B:=\tC/G$ and consider two intermediate quotient curves
$$C_{\sigma}:=\tC/\langle\sigma\rangle, \ \ \ C_{\iota\sigma}:=\tC/\langle\iota\sigma\rangle.$$

Let us compute their genera. A differential on $C_{\sigma}$ is a $\sigma$-invariant differential on $\widetilde C$. On the $\iota$-invariant part, the $\sigma$-invariants are precisely the $G$-invariants, hence have dimension $g(B)$. The Prym part contributes \(d-1\) as \(\rho(\s)\) is a pseudoreflection. Therefore, $$g(C_{\s})=g(B)+d-1.$$
For $C_{\iota\s}$, a Prym differential fixed by $\iota\sigma$ must satisfy $\sigma^*\omega=-\omega$, but $-1$ is not an eigenvalue of $\rho(\sigma)$. Hence only the $G$-invariant differentials contribute, and we get $g(C_{\iota\sigma}) = g(B)$.

Let $r_{\sigma}$ and $r_{\iota\sigma}$ be the numbers of branch points of the double covers $C_{\sigma} \to B$ and $C_{\iota\sigma}\to B$. The Riemann-Hurwitz formula gives $$r_{\sigma}=2(d-g(B)), \ \ \ r_{\iota\sigma}=2(1-g(B)),$$
so
$$r_{\sigma}-r_{\iota\sigma}=2d-2.$$
Therefore, at least $2d-2$ points of $B$ are ramified in $C_{\sigma}/B$ but unramified in $C_{\iota\sigma}/B$.

Let $b\in B$ be one such point and choose a point $\widetilde b\in \tC$ lying above $b$. We denote by $I_b:=\operatorname{Stab}_G(\widetilde b)
=\{g\in G : g(\widetilde b)=\widetilde b\}$
its stabilizer. Since $G$ is abelian, this subgroup does not depend on the
choice of $\widetilde b$. Equivalently, $I_b$ is the inertia
group of the cover $\tC\to B$ at $b$. Since $C_{\iota\sigma}/B$ is unramified at $b$, we have $I_b\subseteq\langle\iota\sigma\rangle.$
On the other hand, since $C_{\sigma}/B$ is ramified at $b$, we have $I_b\not\subseteq\langle\sigma\rangle.$
The cyclic group $\langle\iota\sigma\rangle$ has order $4$, and its unique nontrivial proper subgroup is $\langle\sigma^2\rangle\subseteq\langle\sigma\rangle$. Hence we must have $I_b=\langle\iota\sigma\rangle\cong\ZZ/4\ZZ.$ Thus, there are at least $2d-2$ branch points of $\tC/B$ with this inertia group.

Now set
$$ C_{\iota\sigma^2}:=\tC/\langle\iota\sigma^2\rangle, \ \ \ C_{\iota,\sigma^2}=\tC/\langle\iota,\sigma^2\rangle.$$
Then $C_{\iota\sigma^2}\to C_{\iota,\sigma^2}$ is a double cover. We claim that $g(C_{\iota\sigma^2})-g(C_{\iota,\s^2})=1.$ Indeed, on the $\iota$-invariant differentials the invariance conditions for $\langle\iota\sigma^2\rangle$ and $\langle\iota,\sigma^2\rangle$ coincide. On the Prym part, invariance under $\iota\sigma^2$ is equivalent to $(\sigma^2)^*\omega=-\omega$. Since $\rho(\sigma)$ is a pseudoreflection, this is a one-dimensional condition. On the other hand, no Prym differential is invariant under $\iota$, so the claim is established.

Each point $b$ satisfying $I_b = \langle \iota\s \rangle$ gives a ramification point of $C_{\iota\sigma^2}\to C_{\iota, \sigma^2}$. Indeed, the fiber over $b$ of the cover $\tC \to B$ consists of two points $p_1$ and $p_2$ which are glued under $\tC \to C_{\iota\s^2}$ because $\langle \iota\s \rangle \cap \langle \iota\s^2\rangle = \{1\}$. Since the fiber of $C_{\iota\s^2} \to B$ consists of a unique point, this point must be a branch point of $C_{\iota\s^2} \to C_{\iota,\s^2}$.
Therefore, if $r$ is the ramification degree of $C_{\iota\s^2}\to C_{\iota,\s^2}$, then $r \geq 2d-2$.

On the other hand, by the Riemann-Hurwitz formula, we have $$r=4-2g(C_{\iota, \s^2})\leq4.$$
But $d\geq4$, so we get a contradiction with $r \geq 2d-2$. Thus, a lift of order $4$ is impossible.

It remains to rule out a lift of order $8$. Suppose that $\sigma$ has order $8$ and set $\tau:=\sigma^2$. Then $\tau$ has order $4$, while $\rho(\tau)=\alpha^2$
is a pseudoreflection of order 2. Moreover, $\tau^2=\sigma^4\neq1$
belongs to $\ker\rho$. By Lemma \ref{lem:kernel},
$$g\bigl(\tC/\langle\iota,\tau^2\rangle\bigr)\leq1.$$ Moreover, $\tau^2\neq\iota$, since
$\rho(\tau^2)=1$, whereas $\rho(\iota)=-\id_P$. Hence $\tau$ induces an automorphism of order 4 on $C$. Thus, $\tau$ satisfies the hypotheses of \cite[Lemma 3.5]{ALN}, which implies $g\leq3$. This contradicts $g\geq5$.

It remains to consider \(g=4\). Then
$d=\dim P=3$ and $g(\widetilde C)=7.$  Let
$\alpha\in\Aut(P,\Xi)$ be an order-$4$ pseudoreflection of geometric origin, and let $\sigma\in Z(\iota)$ be a lift of $\alpha$. We first observe that $ |\ker\rho|\leq3.$ Indeed, if $h=g(\widetilde C/\ker\rho)$, then, as in the proof of
\cite[Lemma 3.1]{ALN}, the fact that $\ker\rho$ acts trivially on $P$
implies $h\geq\dim P=3$. Hence the  Riemann-Hurwitz formula gives
$$
12=2g(\tC)-2\geq |\ker\rho|(2h-2)\geq4|\ker\rho|,$$ and the claim follows.

Therefore, $\sigma$ has order 4, 8 or 12. If $\sigma$ has order 12, then $\sigma^3$ has order 4, while
$\rho(\sigma^3)=\alpha^3$ is again an order-4 pseudoreflection. Thus, it suffices to exclude lifts of orders 4 and 8.

Suppose first that $\sigma$ has order 4. As in the proof above,

$$G:=\langle\iota,\sigma\rangle
\cong\mathbb{Z}/2\mathbb{Z}\times\mathbb {Z}/4\mathbb{Z}.$$
Set $B=\tC/G$. The same computation gives
$$g(C_\sigma)=g(B)+2,\ \ \ g(C_{\iota\sigma})=g(B),$$
and therefore
$r_\sigma=2(3-g(B))$ and $ r_{\iota\sigma}=2(1-g(B)).$
In particular, there are at least 4
branch points of $\tC\to B$ whose inertia group is $\langle\iota\sigma\rangle$ of order 4.

We claim that $g(B)=0$. Indeed, $r_{\iota\sigma}\geq0$, so
$g(B)\leq 1$. If $g(B)=1$, the four branch points above already
contribute $$4\cdot |G|\left(1-\frac14\right)=24$$
to the Riemann-Hurwitz formula for $\tC\to B$, while $2g(\widetilde C)-2=12$, which is a contradiction. Thus $B\simeq\mathbb{P}^1$.

Applying the Riemann-Hurwitz formula to 
$\tC\to\mathbb P^1$, we obtain $$12=-16+\sum_{b\in\mathbb P^1}
8\left(1-\frac1{|I_b|}\right).$$
Hence the total ramification contribution is 28. The four points
with inertia group $\langle\iota\sigma\rangle$ contribute 24. It follows that there are exactly four such points and exactly
one additional branch point, whose inertia group has order 2.
Thus the $G$-cover has signature $(0;4,4,4,4,2).$ 

Consider the local monodromies of this cover. The first four lie in
the cyclic subgroup $H:=\langle\iota\sigma\rangle\subsetneq G.$
Since the product of the five local monodromies is the identity, the
fifth monodromy also lies in $H$. Hence all local monodromies belong
to $H$. On the other hand, since the cover is connected and the base
is $\mathbb P^1$, its local monodromies must generate $G$. This is
a contradiction.

It remains to exclude the case in which $\sigma$ has order 8. Let $\tau=\sigma^2$. Then $\tau$ has order $4$, $\rho(\tau)=\alpha^2$ is a
pseudoreflection of order 2, and $\mu:=\tau^2=\sigma^4$ is a nontrivial involution acting trivially on $P$. We claim that $g\bigl(\tC/\langle\iota,\mu\rangle\bigr)\leq1.$ Indeed, the Prym variety of the double cover $\tC/\langle\iota\mu\rangle \to \tC/\langle\iota,\mu\rangle$
corresponds to the subspace of Prym differentials on which $\mu$
acts as $-1$. Since $\mu\in\ker\rho$, this subspace is zero. Thus, by the Riemann-Hurwitz formula, $g(\tC/\langle\iota,\mu\rangle) \leq 1$. Moreover, $\tau^2\neq\iota$, since
$\rho(\tau^2)=1$, whereas $\rho(\iota)=-\id_P$. Hence $\tau$ induces an automorphism of order 4 on $C$ so it satisfies all the hypotheses of \cite[Lemma 3.5]{ALN}. That lemma gives $g\leq3$, contradicting $g=4$.

Therefore an order-4 pseudoreflection cannot occur for $g=4$ either, and the result follows.
\end{proof}

\begin{exam}
\label{ex:order-four}
The bound in Theorem \ref{thm:order-two} is sharp. Indeed, there exist étale double covers of curves of genus 3 whose Prym varieties admit a pseudoreflection of geometric origin of order 4.

Let $$\widetilde G=\langle \iota,\sigma\rangle
\cong \mathbb Z/2\mathbb Z\times\mathbb Z/4\mathbb Z,$$
where $\iota$ has order 2 and $\sigma$ has order 4. Consider a connected $\widetilde G$-Galois cover $\tC\longrightarrow \mathbb{P}^1$ with local monodromies $$\iota\sigma,\ \ \ \iota\sigma,\ \ \ \sigma^2,\ \ \ \iota\sigma^2,\ \ \ \iota\sigma^2.$$
Such a cover exists by the Riemann existence theorem: these elements generate $\widetilde G$ and their product is the identity. Their orders are $(4,4,2,2,2)$, respectively. Hence the Riemann-Hurwitz formula gives
$$2g(\tC)-2 = -16+2\cdot 8\left(1-\frac14\right)+3\cdot 8\left(1-\frac12\right)=8,$$ and hence $g(\tC)=5$.

None of the inertia groups contains $\iota$, so $\iota$ acts freely on $\tC$. Thus $C:=\tC/\langle\iota\rangle$
has genus 3, and $\widetilde C\to C$ is an étale double cover. Let $P=P(\widetilde C/C)$. We claim that the automorphism of $P$ induced by $\sigma$ is a pseudoreflection of order $4$. Set $B=\tC/\widetilde G\simeq\mathbb P^1$. The quotient $\tC/\langle\sigma\rangle\longrightarrow B$
is a double cover branched at four points, and hence $g\bigl(\tC/\langle\sigma\rangle\bigr)=1$. On the other hand, $\tC/\langle\iota\sigma\rangle\to B$
is a double cover branched at two points, so
$g\bigl(\tC/\langle\iota\sigma\rangle\bigr)=0$. Let $$V=H^0(P,\Omega_P^1)=H^0(\widetilde C,\omega_{\tC})^{-}_{\iota}.$$
Since $g(B)=0$, we have $\dim V^{\sigma}= g(\tC/\langle \sigma \rangle) - g(B) =1.$
Moreover, on $V$ the involution $\iota$ acts as $-1$, and hence the $(-1)$-eigenspace of $\sigma$ is precisely the subspace on which $\iota\sigma$ acts trivially. Therefore, $$\dim V^{-}_{\sigma}=g\bigl(\tC/\langle\iota\sigma\rangle\bigr)-g(B)=0.$$
Since $\dim V=2$ and $\sigma^4=1$, the eigenvalues of $\sigma$ on $V$ are $1$ and $\pm i$, hence it is a pseudoreflection of order 4.
\end{exam}

\section{The faithful smooth-quotient classification}\label{sec:groups}

We recall from [ALA22, Theorem 3.5] that if \((P, \Xi)\) is a principally polarized abelian variety and \(H \leq \operatorname{Aut}(P, \Xi)\) is such that \(P / H\) is smooth, then there is an isogeny \(P \simeq\) \(X \times Y\) where \(H\) acts trivially on \(X\), and on \(Y\) with finitely many fixed points. Moreover, there exist elliptic curves \(E_1, \ldots, E_r, F_1, \ldots, F_s\) such that
\[
Y \cong \prod_{i=1}^r E_i^{n_i} \times \prod_{j=1}^s F_j^{t_j}
\]

\[
H=\prod_{i=1}^r\bigl(\left(\mathbb{Z} / m_i \mathbb{Z}\right)^{n_i} \rtimes S_{n_i}\bigr) \times \prod_{j=1}^s S_{t_j+1}
\]
where \(m_i \in\{2,3,4,6\}, n_i, t_j \geq 1\), and each factor of \(H\) acts on the corresponding factor of \(Y\) in an explicit way.

We now prove Theorem \ref{thm:main}, the main result of the paper.

\begin{thm}
\label{thm41}
Let $g\geq5$ and let $G\leq \Aut(\widetilde C)$ be a finite group such that every element of $G$ commutes with $\iota$. Let
$$\rho: G\longrightarrow \Aut(P,\Xi)$$
be the induced representation. Assume that $\rho$ is injective and that $P/\rho(G)$ is smooth. If $G$ is nontrivial, then $$G\cong \ZZ/2\ZZ \ \ \ \text{or}\ \ \ G\cong (\ZZ/2\ZZ)^2.$$
Moreover, if $g > 7$ then $G\cong \ZZ/2\ZZ$. 
Both groups occur for some $g \geq 5$.
\end{thm}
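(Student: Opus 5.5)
The plan is to combine the structure theorem \cite[Theorem 3.5]{ALA}, recalled above, with the list of \cite[Proposition 5.1]{ALN} and with Theorem \ref{thm31}. Since $P/\rho(G)$ is smooth, Chevalley--Shephard--Todd shows that $H:=\rho(G)\cong G$ is generated by pseudoreflections, all of geometric origin. By Theorem \ref{thm31} each of these has order $2$.

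In the decomposition $H=\prod_i((\ZZ/m_i\ZZ)^{n_i}\rtimes S_{n_i})\times\prod_j S_{t_j+1}$, each factor $(\ZZ/m_i\ZZ)^{n_i}$ is generated by pseudoreflections of order $m_i$, namely the generators acting on a single copy of $E_i$. Hence $m_i=2$ for all $i$. I would then go through the nine groups of \cite[Proposition 5.1]{ALN} and discard every group containing an element of order $4$ that is itself a pseudoreflection, or whose structure forces $m_i\in\{3,4,6\}$. The aim is to reduce to groups of the form $(\ZZ/2\ZZ)^{n}\rtimes S_n$, $S_{t+1}$, or small products of them with $n,t$ small. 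The ALN list presumably already bounds these, since $g(\tC/\langle\mu,\iota\rangle)$-type Riemann--Hurwitz bounds cap the number of reflections.

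The main obstacle is eliminating the remaining nonabelian $2$-groups and symmetric groups, for instance $(\ZZ/2\ZZ)^2\rtimes S_2\cong D_4$ and $S_3$. Elements of order $4$ in $D_4$ are not pseudoreflections, so Theorem \ref{thm31} does not directly apply. For these I would use the theta divisor.

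First, $P$ is principally polarized and isogenous to $X\times Y$, where $H$ acts on $Y\cong E^{n}$ or $F^{t}$ as a reflection group. The polarization is $H$-invariant, and for $n\geq 2$ the reflection group forces the restriction of $\Xi$ to $Y$ to be a product-type polarization. Combining this with the reflection hyperplanes being abelian subvarieties of codimension one (the fixed loci of the pseudoreflections), I would aim to show that $\Xi$ must split as $X\times Y$ with $Y$ a product of elliptic curves carrying the product polarization.

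Second, a Prym of an étale double cover with $g\geq5$ is not such a product with many elliptic factors. Indeed, each reflection $\rho(\s)$ of geometric origin yields a double cover $C_{\iota\s}\to \tC/\langle\iota,\s\rangle$ whose Prym is the elliptic curve $\ker(1+\rho(\s))^0$. Its genus count, as in the proof of Theorem \ref{thm31}, gives $g(\tC/\langle\iota,\s\rangle)$ close to $g/2$. With two commuting or conjugate reflections I would apply Riemann--Hurwitz to $\tC\to\tC/\langle\iota,\s,\s'\rangle$ to bound the number of fixed points. This should exclude $S_3$ and $D_4$, where the reflections $\s,\s'$ generate a group with an element $\s\s'$ of order $3$ or $4$ acting on $C$ with too few fixed points, using \cite[Lemma 3.5]{ALN}-style genus bounds. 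That leaves $\ZZ/2\ZZ$ and $(\ZZ/2\ZZ)^2$.

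For $G=(\ZZ/2\ZZ)^2=\langle\s,\s'\rangle$, consider the fixed-point data of $\langle\iota,\s,\s'\rangle\cong(\ZZ/2\ZZ)^3$ acting on $\tC$. Both quotients $\tC/\langle\iota\s\rangle$ and $\tC/\langle\iota\s'\rangle$ have genus $g(\tC/\langle\iota,\s,\s'\rangle)+1$, so each double cover over the base of genus $g_0$ has at most $4$ branch points. Meanwhile $\tC/\langle \s\rangle$ has genus roughly $g_0+d-1$, which forces about $2d$ branch points. Using the Riemann--Hurwitz formula for the $(\ZZ/2\ZZ)^3$-cover and the compatibility of inertia groups, as in the order-four argument, I expect a bound of the form $2d-2\leq 12$, i.e.\ $g\leq 7$.

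For existence, I would take the $\ZZ/2\ZZ$ case from a bielliptic-type construction: $\tC\to\tC/\langle\iota,\s\rangle$ with $\rho(\s)$ a reflection exists for every $g$ by the Riemann existence theorem. For the $(\ZZ/2\ZZ)^2$ case I would write down an explicit $(\ZZ/2\ZZ)^3$-cover of $\mathbb P^1$ with $g=5$, chosen so that $\iota$ acts freely, and verify the eigenvalues and smoothness via the classification.
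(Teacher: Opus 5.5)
Your elimination of the $4$-torsion candidates is fine: the diagonal generator of each $(\ZZ/m_i\ZZ)^{n_i}$ is a geometric pseudoreflection of order $m_i$, so Theorem \ref{thm31} gives $m_i=2$. The proposal nevertheless has two genuine gaps.

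\textbf{First gap: $(\ZZ/2\ZZ)^3$.} This group is on the list of \cite[Proposition 5.1]{ALN}, is generated by involutions and has all $m_i=2$. It therefore survives your reduction, and nothing in your plan removes it. Your conclusion ``that leaves $\ZZ/2\ZZ$ and $(\ZZ/2\ZZ)^2$'' is unjustified, and this is the hardest case. Your splitting strategy does not reach it either: in \cite{ALA} a $\ZZ/2\ZZ$-factor may be an $S_2$ acting on a one-dimensional standard-construction factor, on which the restricted polarization has type $(2)$. So no principally polarized elliptic direct factor need appear.

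The paper needs two ingredients here. The first is a character count over $K=\langle\iota\rangle\times G\cong(\ZZ/2\ZZ)^4$. The eight double covers $\tC/\ker\chi\to B$ with $\chi(\iota)=-1$ have $n_\chi=2(m_\chi-g(B)+1)$ branch points, and every branch point of $\tC\to B$ is a branch point of exactly four of them, so $R=(d+8-8g(B))/2$. Then $n_{\chi_0}=2(d-2-g(B))\le R$ gives $3d+4g(B)\le16$ with $d$ even, hence $d=4$. The second is a polarization-type count: \cite[Remark 4.2]{ALN} allows at most $p\le1$ elliptic direct factors, so at least $q=3-p\ge2$ factors are of standard-construction type, and $d\ge p+2q\ge5$.

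\textbf{Second gap: the dihedral cases.} For $(\ZZ/2\ZZ)^2\rtimes S_2$ (and its product with $S_2$) your outline has the same shape as the paper's: a polarized splitting, then ``a Prym is not such a product''. All the content is in the second step, and your Riemann--Hurwitz justification does not supply it.

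\cite[Lemma 3.5]{ALN} is not available for $\tau=\s\s'$. Indeed $\rho(\tau)$ has eigenvalues $1^{d-2},i,-i$, so it is not a pseudoreflection and $\tau^2\notin\ker\rho$. Running the count of Theorem \ref{thm31} for $\langle\iota,\tau\rangle\cong\ZZ/2\ZZ\times\ZZ/4\ZZ$ only gives $d\le4$. At $g=5$ there is a cover of $\mathbb P^1$ with signature $(0;4,4,4,4,2,2)$ and monodromies $\iota\tau^{\pm1}$ (four times) and $\iota\tau^2$ (twice). It has free $\iota$ and exactly these eigenvalues on the Prym differentials, so this count alone excludes nothing.

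The missing input is \cite[Remark 4.2]{ALN}: a decomposable Prym with a geometric pseudoreflection is $(F,[0])\times(JH,\Theta_H)$, so $\Xi$ has exactly two components. On the other hand, \cite[Theorem 3.5]{ALA} forces $(E^2,\Theta_2)\times(B,\Theta_B)$ with $\dim B=d-2>0$, hence at least three components.

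\textbf{Smaller issues.} Your $g\le7$ sketch needs repair. The curve $\tC/\langle\iota\s\rangle$ is a degree-$4$ cover of $\tC/\langle\iota,\s,\s'\rangle$, and its genus exceeds $g_0+1$ in general. Moreover, $2d-2\le12$ would only give $g\le8$. The correct objects are the four double covers $\tC/\ker\chi\to B$, whose Prym eigenspaces have dimensions $d-2,1,1,0$, with each branch point lying on exactly two of them. This gives $R=d+4-4g(B)$, and $2(d-1-g(B))\le R$ yields $d+2g(B)\le6$. Finally, existence should simply be cited from \cite[Remark 5.2]{ALN} rather than left as a construction to be carried out.
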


\begin{proof}
In the proof we identify $G$ with $\rho(G)$. According to \cite[Proposition 5.1]{ALN}, there are nine possible groups: $$\ZZ/2\ZZ, (\ZZ/2\ZZ)^2, \ZZ/4\ZZ, (\ZZ/4\ZZ)^2, \ZZ/2\ZZ \times \ZZ/4\ZZ, (\ZZ/2\ZZ)^3, (\ZZ/2\ZZ)^2\rtimes S_2, (\ZZ/4\ZZ)^2\rtimes S_2, \bigl((\ZZ/2\ZZ)^2\rtimes S_2\bigr)\times S_2.$$

We eliminate the seven groups other than $\ZZ/2\ZZ$ and $(\ZZ/2\ZZ)^2$.

\medskip
\noindent\textit{Step 1: Groups involving $4$-torsion.}

Since $P/G$ is smooth, $G$ is generated by pseudoreflections. By Theorem \ref{thm31}, all of these generators are involutions. This eliminates $\ZZ/4\ZZ, (\ZZ/4\ZZ)^2, \ZZ/2\ZZ\times \ZZ/4\ZZ$, since these groups are not generated by involutions. The only other possible group containing $4$-torsion is 
$G=(\ZZ/4\ZZ)^2\rtimes S_2,$ where $S_2$ exchanges the two $\ZZ/4$ factors. The homomorphism
$$\varphi\colon G\longrightarrow\ZZ/4\ZZ, \ \ \  \varphi(a,b;\eps)=a+b$$
is surjective. If $x\in G$ is an involution, then
\[
2\varphi(x)=\varphi(x^2)=0,
\]
so $\varphi(x)\in\{0,2\}$. Hence the subgroup generated by all involutions has image contained in the proper subgroup $\{0,2\}\subset\ZZ/4\ZZ$. Thus, $G$ is not generated by involutions. We have eliminated all four candidates involving $\ZZ/4\ZZ$.

\medskip
\noindent\textit{Step 2: The dihedral cases.}

Suppose first that $G\cong (\ZZ/2\ZZ)^2\rtimes S_2$ is the dihedral group of order 8. The group $G$ is not a direct product of two groups and $|G| = 8$, hence by \cite[Theorem 3.5]{ALA}, we have $$(P,\Xi)\cong(E^2,\Theta_2)\times(B,\Theta_B)$$ as principally polarized abelian varieties. Here, $E$ is an elliptic curve, with $G$ acting by the two coordinate sign changes and by permutation of the two factors on $E^2$ and trivially on $B$. Since $d\geq4$, we have $\dim B=d-2>0$, therefore $\Xi$ has at least three irreducible components.

Moreover, since $P$ is decomposable, it follows from \cite[Remark 4.2]{ALN} that
$$(P,\Xi)\cong(F,[0])\times(JH,\Theta_H)$$ for an elliptic curve $F$ and a hyperelliptic curve $H$. But $\Theta_H$ is irreducible, hence $\Xi$ has two irreducible components, which is a contradiction. Hence $(\ZZ/2\ZZ)^2\rtimes S_2$ cannot occur. The same argument excludes $\bigl((\ZZ/2\ZZ)^2\rtimes S_2\bigr)\times S_2$.

\medskip
\noindent\textit{Step 3: The group $(\ZZ/2\ZZ)^3$.}

It remains to rule out $G \cong (\ZZ/2\ZZ)^3$. Let $\sigma_1, \sigma_2, \sigma_3$ be pseudoreflections generating $G$. Then we have $$H^0(P,\Omega_P^1)=W\oplus L_1\oplus L_2\oplus L_3,$$
where $\dim W=d-3, \dim L_i=1$ and $\sigma_i$ acts by $-1$ on $L_i$ and by $+1$ on the other summands. Since $d\geq4$, the common fixed space $W$ is nonzero. Every element of $G$ acts trivially on $W$, whereas $\iota$ acts as $-1$ on all Prym differentials, hence $\iota\notin G$. Let $$K:=\langle\iota,G\rangle\cong(\ZZ/2\ZZ)^4.$$
Set $B:=\widetilde C/K,$ and consider the eight characters $$\chi: K\longrightarrow\{\pm1\}$$
with $\chi(\iota)=-1$. For each such character, let $C_\chi:=\tC/\ker\chi$. If $V_\chi$ denotes the $\chi$-eigenspace in $H^0(\tC,K_{\tC})$, then $$ H^0(C_\chi,K_{C_\chi})=H^0(B,K_B)\oplus V_\chi,$$
so $m_{\chi} := \dim V_\chi=g(C_\chi)-g(B).$

Let $n_\chi$ be the number of branch points of $C_\chi\to B$. The Riemann-Hurwitz formula gives $$n_\chi=2(m_\chi-g(B)+1).$$ Let $R$ be the number of branch points of the full $K$-cover $\widetilde C\to B$. Recall that for the action of a group on a smooth curve all inertia subgroups are cyclic, thus, since $K$ is elementary abelian, every nontrivial inertia subgroup has order $2$. Its generator is never $\iota$, because the covering involution is fixed-point-free. For any nontrivial inertia element $v\neq\iota$, exactly four of the eight characters with $\chi(\iota)=-1$ satisfy $\chi(v)=-1$. Thus, every branch point of $\tC/B$ is counted in exactly four of the double covers $C_\chi/B$, and
$$\sum_\chi n_\chi=4R.$$
Thus, we obtain $$R=\frac{d+8-8g(B)}{2}.$$ In particular, $d$ is even.

Now take the character $\chi_0$, which is trivial on $G$, i.e. corresponding to the common fixed space $W$. Its eigenspace has dimension $d-3$, and therefore $n_{\chi_0}=2(d-g(B)-2).$
Every branch point of the corresponding double cover is a branch point of $\tC/B$, so $n_{\chi_0}\leq R$. Therefore,

$$2(d-g(B)-2)\leq\frac{d+8-8g(B)}{2},$$
which simplifies to

$$3d+4g(B)\leq16.$$
Since $d\geq4$ is even, we must have $d = 4$.

By \cite[Theorem 3.5 and Lemma 3.6]{ALA}, each of the three $\ZZ/2\ZZ$-factors gives either a principally
polarized elliptic direct factor, or a one-dimensional factor of the standard
construction, in which case the restricted polarization is of type $(2)$.
Let $p$ and $q$ denote the numbers of factors of these two types,
respectively. 

Again, by \cite[Remark 4.2]{ALN}, the only case in which a Prym variety admitting
a geometric pseudoreflection is a polarized product is $$(P,\Xi)\simeq (E,\Theta_E)\times(JH,\Theta_H).$$
Therefore, $p\leq 1$, and thus $q\geq2$.

For $q$ one-dimensional standard factors, the moving part has dimension
$q$ and polarization type $(2,\ldots,2)$. In the standard construction from \cite{ALA}, the auxiliary polarized abelian variety has dimension at least $q$. Consequently, $$d\ge p+2q.$$
Since $p+q=3$ and $p\leq 1$, we obtain
$d \geq 5$
contradicting \(d=4\). Thus \(G\simeq(\ZZ/2\ZZ)^3\) cannot occur.

\medskip
\noindent\textit{Step 4: $G \cong (\mathbb{Z}/2\mathbb{Z})^2$ forces $g \leq 7$.}

Let $G \cong (\mathbb{Z}/2\mathbb{Z})^2$ and $B := \tC / (G \times \langle\iota\rangle)$. Repeating the analysis from the first part of Step 3, we get the inequality $$2(g - g(B) - 2) \leq d - 4g(B) + 4,$$ which is equivalent to $$d+2g(B) \leq 6.$$ Thus, $d = g-1 \leq 6$, hence $g \leq 7$.

Finally, we note that both groups $\ZZ/2\ZZ$ and $(\ZZ/2\ZZ)^2$ occur by the constructions in \cite{ALN}; see \cite[Remark 5.2]{ALN} and also \cite[Remark 2.7]{BO26}. This proves the theorem.
\end{proof}

\subsection*{Statement on AI use}
A substantial part of this work was developed with the assistance of ChatGPT 5.6 Sol. In particular, the statement of Theorem \ref{thm:order-two} and the main ideas behind its proof were suggested by the LLM and served as the starting point for this work. The full text of the article was written by the author, who takes full responsibility for the correctness of all arguments.

\textsc{A. Shatsila, Doctoral School of Exact and Natural Sciences, Jagiellonian University, ul. prof. Stanisława Łojasiewicza 6, 30-348 Kraków, Poland}\\
\textit{email address:} anatoli.shatsila@doctoral.uj.edu.pl.


\begin{thebibliography}{99}

\bibitem{BO26}
P.~Borówka and A.~Ortega,
\emph{Klein coverings over hyperelliptic genus 3 curves},
arXiv:2602.18969, 2026.

\bibitem{ALA}
R.~Auffarth and G.~Lucchini Arteche,
\emph{Smooth quotients of principally polarized abelian varieties},
Mosc. Math. J. \textbf{22} (2022), no.~2, 225--237.

\bibitem{ALN}
R.~Auffarth, M.~Lahoz and J.~C.~Naranjo,
\emph{Pseudoreflections on Prym varieties},
arXiv:2412.04940v2.

\end{thebibliography}
\end{document}